\def\H{\widetilde{H}}
\def\R{{\mathbb R}}
\def\EE{{\mathcal E}}
\def\KK{{\mathcal K}}
\def\MM{{\mathcal M}}
\def\PP{{\mathcal P}}
\def\SS{{\mathcal S}}
\def\SSS{{\mathbb S}}
\def\TT{{\mathcal T}}
\def\NN{\mathcal{N}}
\def\XX{{\mathcal X}}
\def\II{{\mathcal I}}
\def\L{\mathbf{L}}
\def\H{\mathbf{H}}
\newcommand{\norm}[3][]{#1\|#2#1\|_{#3}}
\def\set#1#2{\big\{#1\,:\,#2\big\}}
\def\eps{\varepsilon}
\newcounter{const}
\def\setc#1{\refstepcounter{const}\label{const:#1}C_{\theconst}}
\def\c#1{C_{\ref{const:#1}}}
\newcounter{contractionnumber}
\def\nameq#1#2{%
  \ifthenelse{\equal{#1}{reduction}}{q_{\rm red}}{%
  \ifthenelse{\equal{#1}{estconv}}{q_{\rm est}}{%
  \ifthenelse{\equal{#1}{cea}}{q_{\mbox{\scriptsize C\'ea}}}{%
  \ifthenelse{\equal{#2}{newcounter}}{\refstepcounter{contractionnumber}\label{contraction#1}}{}q_{\ref{contraction#1}}}%
}}}
\def\Ce{C_e}
\def\mmm{\mathbf{m}}
\def\EEE{\mathbf{E}}
\def\HHH{\mathbf{H}}
\def\heff{\mathbf{H}_{\text{eff}}}
\def\subsub{\Subset}
\def\nnn{\mathbf{n}}
\def\ppp{\mathbf{p}}
\def\zzz{\mathbf{z}}
\def\xxx{\mathbf{x}}
\def\vvv{\mathbf{v}}
\def\pphi{\pmb{\phi}}
\def\ppsi{\pmb{\psi}}
\def\vphi{\pmb{\varphi}}
\def\zzeta{\pmb{\zeta}}
\def\dt{d_t}
\def\curl{\text{\textbf{curl}}}
\def\wgamma{\gamma_{hk}}
\DeclareMathOperator{\diver}{div}
\newtheorem{theorem}{Theorem}
\newtheorem{lemma}[theorem]{Lemma}
\newtheorem{algorithm}[theorem]{Algorithm}
\newtheorem{definition}[theorem]{Definition}
\newtheorem{remark}[theorem]{Remark}
\def\subsection#1
\bf\arabic{section}.\arabic{subsection}.~#1.~}
\begin{document}

\title[Eddy-Current-LLG]%
{On a decoupled linear FEM integrator for Eddy-current-LLG}
\date{\today}

\author{Kim-Ngan~Le}
\author{Marcus~Page}
\author{Dirk~Praetorius}
\author{Thanh~Tran}
\address{Institute for Analysis and Scientific Computing,
       Vienna University of Technology,
       Wiedner Hauptstra\ss{}e 8-10,
       A-1040 Wien, Austria}
\email{dirk.praetorius@tuwien.ac.at}
\email{marcus.page@tuwien.ac.at}
\address{School of Mathematics and Statistics,
         The University of New South Wales,
         Sydney 2052, Australia}
\email{n.le-kim@student.unsw.edu.au}
\email{thanh.tran@unsw.edu.au}

\keywords{quasi-static Maxwell-LLG, eddy-current
equation, finite element, ferromagnetism}
\subjclass[2010]{65M12, 65N30, 35K55}

\begin{abstract}
We propose a numerical integrator for the coupled system of
the eddy-current equation with the nonlinear
Landau--Lifshitz--Gilbert equation. The considered effective
field contains a general field contribution, and we particularly cover exchange, 
anisotropy, applied field, and magnetic field (stemming from the eddy-current equation). Even though the considered problem is nonlinear, our scheme requires only the solution of two linear systems per time-step. Moreover, our algorithm decouples both equations so that in each time-step, one linear system is solved for the magnetization, and afterwards one linear system is solved for the magnetic field. Unconditional convergence -- at least of a subsequence -- towards a weak solution
is proved, and our analysis even provides existence of such weak solutions. Numerical experiments with a micromagnetic
benchmark problem underline the performance of the
proposed algorithm.
\end{abstract}


\maketitle
\section{Introduction}\label{sec:intro}

The Landau--Lifshitz--Gilbert equation (LLG) has been
widely used to model micromagnetic phenomena which have
applications in the production of magnetic sensors,
recording heads, and magneto-resistive storage devices
\cite{Gil55, LL35}. Existence and non-uniqueness results can be found 
in \cite{as, visintin}. In our contribution, the LLG equation is
coupled with the quasi-static Maxwell's equations
to describe
electromagnetic wave and magnetization propagation of a
ferromagnetic medium confined in a larger magnetic
field.

Throughout the literature, various works on the numerical
analysis of LLG and coupling to the full Maxwell system can be found,
and we refer to~\cite{alouges2008,alouges2011,banas,maxwell,bjp,bp} 
and the references therein. 
Considering the quasi-static approximation of 
the Maxwell system, also known as the eddy-current equation (E), however, 
only little work has been done. 

In \cite{lt}, the analysis of~\cite{alouges2008} is successfully
extended 
to the study of the coupled eddy-current and Landau-Lifshitz-Gilbert system (ELLG), for a simplified effective field.
There, a convergent linear integrator was developed which, however, needs
the solution of one huge linear system for the coupled problem. On the other hand,
in~\cite{maxwell}, an algorithm for the Maxwell-LLG system is presented which 
decouples both problems and requires the solution of two small linear systems per 
time-step. In the present paper, we combine the ideas of~\cite{maxwell} and~\cite{lt}
to derive an unconditionally convergent algorithm for the ELLG system which
decouples both problems. The proposed algorithm requires the successive solution
of only two small linear systems, one for LLG- and one for the eddy-current part. This improvement has a huge impact on the computational applicability of the scheme since an existing LLG solver can easily be reused. This simplifies implementation as well as possible debugging. Moreover, possible
preconditioning of the eddy-current part greatly benefits from the decoupling as well. Finally,
we introduce a general field operator $\pi(\cdot)$ which allows us to cover much more general field contributions than previous works. In particular, our work covers exchange,
anisotropy, and external field contributions, as well as the magnetic field from the eddy-current part. We emphasize that, with the techniques from~\cite{multiscale}, a spatial approximation of the effective field can rigorously be included into the analysis.

The remainder of the paper is organized as follows. In
Section~\ref{sec:problem} we give the precise problem formulation as well as the notion of a weak solution. Section~\ref{sec:prelim} is devoted to the
introduction of finite element spaces and their
approximation properties. The algorithm is presented in
Section~\ref{sec:algo}, and the main result on convergence
is presented and proved in
Section~\ref{sec:convergence}. Finally, 
Section~\ref{sec:numerics} is devoted to our numerical
results.

\section{Problem formulation}\label{sec:problem}
We consider the Landau-Lifshitz-Gilbert equation coupled with the eddy-current equation. This system describes the evolution of the magnetization of a ferromagnetic body that occupies the domain $\omega \subsub \Omega \subseteq \R^3$. For a given damping parameter $\alpha > 0$, the magnetization $\mmm :(0,T) \times \omega \rightarrow \SSS^2$ and the magnetic field $\HHH:(0,T) \times \Omega \rightarrow \R^3$ satisfy the ELLG system
\begin{subequations}\label{eq:mllg}
\begin{align}
&\mmm_t - \alpha \mmm \times \mmm_t = - \mmm \times \heff \quad \text{ in } \omega_T := (0,T) \times \omega\label{eq:mllg1}\\
&\mu_0 \HHH_t + \sigma^{-1} \nabla \times (\nabla \times \HHH) = -\mu_0 \mmm_t \quad \text{ in } \Omega_T := (0,T) \times \Omega\label{eq:mllg2}
\end{align}
where the effective field $\heff$ consists of $\heff = C_e \Delta \mmm + \HHH + \pi(\mmm)$ for some general time-independent energy contribution $\pi : \L^2(\Omega) \to \L^2(\Omega)$, which is assumed to fulfill a certain set of properties, see~\eqref{eq:assum2}--\eqref{eq:assum3}.  We stress that, with the techniques from~\cite{multiscale}, an approximation $\pi_h$ of $\pi$ can rigorously be included into the analysis as well, see Section~\ref{sec:prelim} below. 
Furthermore, we emphasize that throughout this work, the case $\heff = C_e \Delta \mmm + \HHH + C_a D\Phi(\mmm) + \HHH_{ext}$ is particularly covered. Here, $\Phi(\cdot)$ denotes the crystalline anisotropy density, and $\HHH_{ext}$ is a given applied field. The constant $\mu_0 \ge 0$ denotes the magnetic permeability of free space, and the constant $\sigma \ge 0$ stands for the conductivity of the ferromagnetic domain $\omega$. As is usually done for simplicity, we assume $\Omega \subset \R^3$ to be bounded with perfectly conducting outer surface $\partial \Omega$ into which the ferromagnet $\omega \subsub \Omega$ is embedded, and $\Omega \backslash \overline \omega$ is assumed to be vacuum. Additionally, the ELLG system~\eqref{eq:mllg} is supplemented by initial conditions
\begin{align}\label{eq:init}
\mmm(0,\cdot) = \mmm^0  \text{ in } \omega \quad \text{ and } \quad  \HHH(0,\cdot) = \HHH^0  \text{ in } \Omega
\end{align}
as well as boundary conditions
\begin{align}\label{eq:boundary}
\partial_\nnn\mmm = 0  \text{ on } \partial \omega_T, \qquad (\nabla \times \HHH)\times\nnn = 0  \text{ on } \partial \Omega_T.
\end{align}
The space $\HHH(\curl; \Omega)$ is defined in Section~\ref{sec:prelim}. Note that the side constraint $|\mmm| = 1$ a.e.\ in $\omega_T$  
directly follows from $|\mmm^0| = 1$ a.e.\ in $\omega$ and $\partial_t |\mmm|^2 = 2 \mmm \cdot \mmm_t = 0$ in $\omega_T$, which is a consequence of~\eqref{eq:mllg1}. This behaviour should also be reflected by the numerical integrator.
In analogy to~\cite{lt},
we assume the given data to satisfy
\begin{align}\label{eq:data}
\mmm^0 \in H^1(\omega, \SSS^2), \qquad \HHH^0 \in  \HHH(\curl; \Omega)
\end{align}
as well as
\begin{align}\label{eq:consistency}
\diver(\HHH^0 + \chi_\omega \mmm^0) = 0 \quad \text{ in } \Omega, \qquad \langle\HHH^0 + \chi_\omega\mmm^0, \nnn\rangle = 0 \quad \text{ on } \partial \Omega.
\end{align}
\end{subequations}
We now recall the notion of a weak solution of~\eqref{eq:mllg1}--\eqref{eq:mllg2} from~\cite{lt} which extends~\cite{as} from the pure LLG to ELLG .
\begin{definition}\label{def:weak_sol}
Given~\eqref{eq:data}--\eqref{eq:consistency}, the tupel $(\mmm, \HHH)$ is called a weak solution of ELLG if,
\begin{itemize}
\item[(i)] $\mmm \in \H^1(\omega_T)$ with $|\mmm| = 1$ almost everywhere in $\omega_T$;
\item[(ii)] $\HHH, \HHH_t, \nabla \times \HHH \in \L^2(\Omega_T)$, i.e.\ $\HHH \in H^1(\L^2) := H^1([0,T]; \L^2(\Omega))$ and $\nabla \times \HHH \in \L^2(\Omega_T)$ in the weak sense;
\item[(iii)] for all $\vphi \in C^\infty(\omega_T)$ and $\zzeta \in C^\infty(\Omega_T)$, we have
\begin{align}\label{eq:weak_sol1}
\int_{\omega_T} \langle\mmm_t, \vphi\rangle - \alpha \int_{\omega_T} \langle(\mmm \times \mmm_t), \vphi\rangle &= -C_e\int_{\omega_T} \langle(\nabla \mmm\times\mmm), \nabla\vphi\rangle\\\nonumber
&\quad+ \int_{\omega_T}\langle(\HHH \times \mmm),\vphi\rangle + \int_{\omega_T}\langle(\pi(\mmm)\times \mmm), \vphi\rangle, \\
\mu_0\int_{\Omega_T}\langle\HHH_t, \zzeta \rangle + \sigma^{-1}\int_{\Omega_T}\langle\nabla \times \HHH, \nabla \times \zzeta \rangle &= - \mu_0\int_{\omega_T} \langle\mmm_t, \zzeta\rangle;\label{eq:weak_sol2}
\end{align}
\item[(iv)] there holds $\mmm(0,\cdot) = \mmm^0$ and $\HHH(0, \cdot) = \HHH^0$ in the sense of traces;
\item[(v)] for almost all $t' \in [0,T]$, we have bounded energy
\begin{align}\label{eq:energy}
\norm{\nabla \mmm(t')}{\L^2(\omega)}^2 + \norm{\mmm_t}{\L^2(\omega_{t'})}^2 &+ \norm{\HHH(t')}{\L^2(\Omega)}^2 + \norm{(\nabla \times \HHH)(t')}{\L^2(\Omega)}^2 + \norm{\HHH_t}{\L^2(\Omega_{t'})}^2
 \le \c{energy},
\end{align}
where $\setc{energy}>0$ is independent of $t'$.
\end{itemize}
\end{definition}

Existence of weak solutions for a simplified effective field was first shown in~\cite{lt}. Moreover, existence also follows from the current work as our analysis is constructive.

\begin{remark}
In the special case $\heff = \Delta \mmm + \HHH$, the energy estimate~\eqref{eq:energy} becomes
\begin{align*}
\EE(t') + \norm{\mmm_t}{\L^2(\Omega_{t'})}^2 + \norm{\HHH_t}{\L^2(\Omega_{t'})}^2 + \norm{\nabla \times \HHH}{\L^2(\Omega_{t'})}^2 \le \EE(0),
\end{align*}
with 
\begin{align*}
\EE(t') = \norm{\nabla \mmm (t')}{\L^2(\omega)}^2 + \norm{\HHH(t')}{\L^2(\Omega))}^2 + \norm{(\nabla \times \HHH)(t')}{\L^2(\Omega)}^2.
\end{align*}
Moreover, under some additional assumptions on the general operator $\pi(\cdot)$, namely boundedness in $\L^4(\Omega)$ and self-adjointness, one can even derive
\begin{align*}
\EE(t') + C \norm{\mmm_t}{\L^2(\Omega_{t'})}^2 + \norm{\HHH_t}{\L^2(\Omega_{t'})}^2 + \norm{\nabla \times \HHH}{\L^2(\Omega_{t'})}^2 \le \EE(0)
\end{align*}
for the full effective field
see~\cite{diss}.
\end{remark}

\begin{remark}
We emphasize the additional regularity $\HHH_t \in \L^2(\Omega_T)$ and $\nabla \times \HHH \in \L^2(\Omega_T)$ for the derivative and the curl of the magnetic field $\HHH$. If LLG is coupled to the full Maxwell system, the current analysis of weak solvers provides only the reduced regularity $\EEE, \HHH \in \L^2(\Omega_T)$ for the electric and magnetic field, see~\cite{banas, maxwell}.
\end{remark}
\section{Preliminaries}\label{sec:prelim}
For time discretization, we impose a uniform partition $0 = t_0 < t_1 < \hdots < t_N = T$ of the time interval $[0,T]$. The time-step size is denoted by $k=k_j := t_{j+1} - t_j$ for $j = 0, \hdots, N-1$. For each (discrete) function $\vphi$, we denote by $\vphi^j := \vphi(t_j)$ the evaluation at time $t_j$. 
Furthermore, we write $d_t \vphi^{j+1} := (\vphi^{j+1} - \vphi^j)/k$ for $j \ge 1$ and a sequence $\{\vphi^j\}_{j\ge 0}$.

For the spatial discretization, let $\TT_h^{\Omega}$ be a regular triangulation of the polyhedral bounded Lipschitz domain $\Omega \subset \R^3$ into compact and non-degenerate tetrahedra. By $\TT_h$, we denote its restriction to $\omega \subsub \Omega$, where we assume that $\omega$ is resolved, i.e.\
\begin{align*}
\TT_h = \TT_h^{\Omega}|_\omega = \set{T \in \TT_h^{\Omega}}{T \cap \omega \neq \emptyset} \quad \text{and} \quad \overline \omega = \bigcup_{T \in \TT_h} T.
\end{align*}
By $\SS^1(\TT_h)$, we denote the standard $\PP^1$-FEM space of globally continuous and piecewise affine functions from $\omega$ to $\R^3$, i.e.
\begin{align*}
\SS^1(\TT_h) := \{\pphi_h \in C(\overline \omega, \R^3) : \pphi_h|_T \in \PP_1(T) \text{ for all } T \in \TT_h\}.
\end{align*}
By $\II_h: C(\Omega) \to \SS^1(\TT_h)$, we denote the nodal interpolation operator onto this space.
The set of nodes of the triangulation $\TT_h$ is denoted by $\NN_h$.
To discretize the magnetization $\mmm$ in~\eqref{eq:mllg1}, we define the set of admissible discrete magnetizations by
\begin{align*}
\MM_h := \{\pphi_h \in \SS^1(\TT_h) : |\pphi_h(\zzz)| = 1 \text{ for all } \zzz \in \NN_h\}.
\end{align*}
The main idea in the upcoming algorithm is to introduce an additional free variable $\vvv$ for the time derivative of $\mmm$, since LLG is a linear equationin $\vvv = \mmm_t$. 
Due to the modulus constraint $|\mmm(t)| = 1$, and therefore $\mmm_t\cdot \mmm = 0$ almost everywhere in $\omega_T$, we discretize the time derivative $\vvv(t_j) := \mmm_t(t_j)$ in the discrete tangent space which is defined by
\begin{align*}
\KK_{\pphi_h} := \{\ppsi_h \in \SS^1(\TT_h) : \ppsi_h(\zzz) \cdot \pphi_h(\zzz) = 0 \text{ for all } \zzz \in \NN_h\}
\end{align*}
for any $\pphi_h \in \MM_h$. For two vectors $\mathbf{x}, \mathbf{y} \in \R^3, \mathbf{x}\cdot \mathbf{y}$ stands for the usual scalar product in $\R^3$.

To discretize the eddy-current equation~\eqref{eq:mllg2}, we follow the lines of~\cite{lt} and use the conforming ansatz spaces $\XX_h \subset \HHH(\curl;\Omega) := \set{\vphi \in \L^2(\Omega)}{\nabla \times \vphi \in \L^2(\Omega)}$, given by the first order edge elements, i.e.\
\begin{align*}
\XX_h := \{\vphi_h \in \HHH(\curl; \Omega) : \vphi_h|_T \in \PP_1(T) \text{ for all } T \in \TT_h^{\Omega}\},
\end{align*}
cf.~\cite[Chapter 8.5]{monk}. Associated with $\XX_h,$ let $\II_{\XX_h}: \H^2(\Omega) \to \XX_h$ denote the corresponding nodal FEM interpolator.
By standard estimates, see e.g. \cite{monk, brennerscott}, one derives the approximation property
\begin{align}\label{eq:interp}
&\norm{\vphi - \II_{\XX_h}\vphi}{\L^2(\Omega)} + h\norm{\nabla \times (\vphi - \II_{\XX_h}\vphi)}{\L^2(\Omega)} \le C\, h^2\norm{\nabla ^2 \vphi}{\L^2(\Omega)}
\end{align}
for all $\vphi \in \H^2(\Omega)$. Here and throughout, $h > 0$ denotes the maximal element diameter of the elements $T \in \TT_h$.

As for the general field contribution, we assume that $\pi$ is a spatial operator which maps the magnetization $\mmm(t) \in \L^2(\Omega)$ at given time $t$ onto some field $\pi(\mmm)(t) = \pi(\mmm(t))\in \L^2(\Omega)$, i.e.~$\pi(\cdot)$ is not time-dependent. As mentioned above, it is even possible to replace $\pi$ by some numerical approximation $\pi_h$ as long as a certain weak convergence property is fulfilled, cf.~\cite[Equation (32)]{multiscale}. In particular, this includes approximation errors, arising from numerical computation of complicated field contributions, into the analysis. 

Finally, given two expressions $A$ and $B$, we write $A \lesssim B$ if there exists a constant $c>0$ which is independent of $h$ and $k$, such that $A \le cB$. 
\section{Numerical algorithm}\label{sec:algo}

We recall that the  LLG equation~\eqref{eq:mllg1} can equivalently be stated as
\begin{align}\label{eq:llg:algo}
\alpha \mmm_t + \mmm \times \mmm_t = \heff-(\mmm \cdot \heff)\mmm
\end{align}
under the constraint $|\mmm| = 1$ almost everywhere in $\Omega_T$.
This formulation will now be used to construct the upcoming numerical scheme, where we follow the approaches of~\cite{alouges2008, alouges2011, multiscale, petra, mathmod2012, gamm2011}. Note that in contrast to~\cite{lt}, our integrator fully decouples LLG from the eddy-current equation which greatly simplifies an actual numerical implementation as well as the possible preconditioning of iterative solvers.

\begin{algorithm}\label{alg}
\begin{itemize}
Input: Initial data $\mmm^0$ and $\HHH^0$, parameter $0 \le \theta \le 1$, counter $i = 0$. For all $i=0, \hdots, N-1$ iterate:
\item[(i)] Compute unique solution $\vvv_h^i \in \KK_{\mmm_h^i}$ such that for all $\pphi_h \in \KK_{\mmm_h^i}$ there  holds
\begin{subequations}
\begin{equation}\label{eq:alg:1}
\begin{split}
&\alpha(\vvv_h^i, \pphi_h) + \big((\mmm_h^i \times \vvv_h^i), \pphi_h\big) + C_e(\theta k\nabla \vvv_h^i, \nabla \pphi_h) = -C_e (\nabla\mmm_h^i, \nabla \pphi_h) \\&\hspace*{35ex}+ (\HHH_h^{i}, \pphi_h) + \big(\pi(\mmm_h^i), \pphi_h\big).
\end{split}
\end{equation}
\item[(ii)] Define $\mmm_h^{i+1} \in \MM_h$ nodewise by $\displaystyle \mmm_h^{i+1}(\zzz) = \frac{\mmm_h^i(\zzz) + k\vvv_h^i(\zzz)}{|\mmm_h^i(\zzz) + k\vvv_h^i(\zzz)|}$ for all $\zzz \in \NN_h$.
\item[(iii)] Compute unique solution $\HHH_h^{i+1} \in \XX_h$ such that for all $\zzeta_h \in \XX_h$ there holds
\begin{align}\label{eq:alg:2}
&\hspace*{4ex}\mu_0(d_t\HHH_h^{i+1}, \zzeta_h) + \sigma^{-1} (\nabla \times \HHH_h^{i+1}, \nabla \times \zzeta_h) = -\mu_0(\vvv_h^i, \zzeta_h).
\end{align}
\end{subequations}
\end{itemize}
\end{algorithm}

The following lemma states that the above algorithm is indeed well-defined. 
\begin{lemma}\label{lem:solve:alg}
Algorithm~\ref{alg} is well-defined in the sense that it admits a unique solution $(\vvv_h^i, \mmm_h^{i+1}, \HHH_h^{i+1})$ at each step $i = 0, \hdots, N-1$ of the iterative loop. Moreover, we have $\norm{\mmm_h^i}{\L^\infty(\omega)} = 1$ for each $i = 0, \hdots, N$.
\end{lemma}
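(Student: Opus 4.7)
The plan is to treat the three substeps (i)--(iii) separately, exploiting at each stage that we are dealing with a linear problem on a finite-dimensional space, so that unique solvability reduces to injectivity of the associated operator. In parallel I would carry out an induction on $i$: assuming $\mmm_h^i \in \MM_h$ and $\HHH_h^i \in \XX_h$ are given, I would produce $\vvv_h^i$, then $\mmm_h^{i+1} \in \MM_h$, and finally $\HHH_h^{i+1} \in \XX_h$.

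For step (i), I would observe that the left-hand side of~\eqref{eq:alg:1} defines the bilinear form
\begin{align*}
a_i(\vvv, \pphi) := \alpha(\vvv,\pphi) + \big((\mmm_h^i\times\vvv),\pphi\big) + C_e\,\theta k\,(\nabla\vvv,\nabla\pphi)
\end{align*}
on the finite-dimensional space $\KK_{\mmm_h^i}$. Testing with $\pphi = \vvv$, the cross-product term vanishes pointwise since $(\mmm_h^i\times\vvv)\cdot\vvv = 0$, so $a_i(\vvv,\vvv) \ge \alpha\,\|\vvv\|_{\L^2(\omega)}^2$. This coercivity (for any $\theta\in[0,1]$ and $k>0$) gives injectivity, and in finite dimensions therefore existence and uniqueness of $\vvv_h^i \in \KK_{\mmm_h^i}$ satisfying~\eqref{eq:alg:1}.

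For step (ii), the only thing to verify is that the denominator $|\mmm_h^i(\zzz) + k\vvv_h^i(\zzz)|$ never vanishes, so that the nodewise normalization is well-defined. This is where the tangent-space constraint $\vvv_h^i \in \KK_{\mmm_h^i}$ is crucial: at each node one has $\vvv_h^i(\zzz)\cdot\mmm_h^i(\zzz) = 0$, so by the Pythagorean identity
\begin{align*}
\big|\mmm_h^i(\zzz) + k\vvv_h^i(\zzz)\big|^2 = |\mmm_h^i(\zzz)|^2 + k^2|\vvv_h^i(\zzz)|^2 = 1 + k^2|\vvv_h^i(\zzz)|^2 \ge 1.
\end{align*}
Hence $\mmm_h^{i+1}$ is well-defined, $|\mmm_h^{i+1}(\zzz)| = 1$ at every node, and thus $\mmm_h^{i+1} \in \MM_h$, which propagates the induction hypothesis and in particular yields $\|\mmm_h^{i+1}\|_{\L^\infty(\omega)} = 1$. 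The base case is ensured by taking $\mmm_h^0$ as the nodal interpolant of $\mmm^0$ (so that $|\mmm_h^0(\zzz)|=1$ at the nodes).

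For step (iii), I would rewrite the update as a linear system in $\HHH_h^{i+1} \in \XX_h$ by expanding $d_t\HHH_h^{i+1} = (\HHH_h^{i+1} - \HHH_h^i)/k$ and moving the term with $\HHH_h^i$ and $\vvv_h^i$ to the right-hand side. The associated bilinear form on $\XX_h$ is
\begin{align*}
b(\HHH,\zzeta) := \tfrac{\mu_0}{k}(\HHH,\zzeta) + \sigma^{-1}(\nabla\times\HHH,\nabla\times\zzeta),
\end{align*}
which is coercive on $\HHH(\curl;\Omega)$ (and a fortiori on $\XX_h$) because $b(\HHH,\HHH) \ge (\mu_0/k)\,\|\HHH\|_{\L^2(\Omega)}^2$. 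Since $\XX_h$ is finite-dimensional, Lax--Milgram (or just coercivity plus finite dimension) yields a unique $\HHH_h^{i+1} \in \XX_h$. I do not anticipate a real obstacle here; the only point that requires any thought is the well-definedness of the nodal projection in (ii), which is handled precisely by the tangent-space constraint built into $\KK_{\mmm_h^i}$.
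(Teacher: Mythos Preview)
Your proposal is correct and follows essentially the same approach as the paper: coercivity (positive definiteness) of the bilinear forms on finite-dimensional spaces for steps~(i) and~(iii), and the Pythagorean identity from the tangent-space constraint for step~(ii). The only point the paper makes explicit that you leave implicit is that $|\mmm_h^{i+1}(\zzz)|=1$ at the nodes implies $\|\mmm_h^{i+1}\|_{\L^\infty(\omega)}=1$ on all of $\omega$ via barycentric coordinates (the modulus of a convex combination of unit vectors is at most~$1$); you may want to add one clause to that effect.
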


\begin{proof}
Unique solvability of~\eqref{eq:alg:1}--\eqref{eq:alg:2} directly follows from the linearity of the right-hand sides, positive definiteness of the left-hand sides, and finite space dimension, cf.\ e.g.~\cite{maxwell}. Due to the Pythagoras theorem and the pointwise orthogonality from $\KK_{\mmm_h^i}$, we further get $|\mmm_h^i(\zzz) + k \vvv_h^i(\zzz)|^2 = |\mmm_h^i(\zzz)|^2 + k|\vvv_h^i(\zzz)|^2 \ge 1$, and thus also step $(ii)$ of the algorithm is well-defined. The boundedness of $\norm{\mmm_h^i}{\L^\infty(\omega)}=1$ finally follows from normalization at the grid points and use of barycentric coordinates.
\end{proof}

\begin{remark}
At first glance, it might seem a bit odd that the notion of a weak solution and the construction of the numerical scheme rely on different formulations of LLG. Besides the fact that the weak solution was already formulated in earlier works, one would expect that the algorithm even converges to a tupel $(\mmm, \HHH)$ that fulfills a formulation of a weak solution based on equation~\eqref{eq:llg:algo}. Suprisingly, however, this is not the case as an additional term occurs. For details, the reader is referred to~\cite{diss}.
\end{remark}

\section{Main theorem \& Convergence analysis}\label{sec:convergence}
In this section, we consider the convergence properties of the above algorithm and show that it indeed converges towards a weak solution of the coupled ELLG system. Moreover, the proof is constructive in the sense that it even shows existence of weak solutions of ELLG.

\subsection{Main result}\label{sec:mainresult}
We start by collecting some general assumptions. Throughout, we assume that the spatial meshes $\TT_h$ are uniformly shape regular and satisfy the angle condition
\begin{align}\label{eq:assum1}
\int_\omega \nabla \zeta_i \cdot \nabla \zeta_j \le 0 \quad \text{ for all hat functions } \zeta_i, \zeta_j \in \SS^1(\TT_h) \text{ with } i \neq j.
\end{align}
For $\xxx \in \Omega$ and $t \in [t_i, t_{i+1})$, we now define for $\gamma_h^\ell \in \{\mmm_h^\ell, \HHH_h^\ell, \vvv_h^\ell\}$ the time approximations
\begin{equation}\label{eq:discrete_functions}
\begin{split}
&\wgamma(t, \xxx) := \frac{t-t_i}{k}\gamma_h^{i+1}(\xxx) + \frac{t_{i+1} - t}{k}\gamma_h^i(\xxx),\\
&\wgamma^-(t, \xxx):= \gamma_h^{i}(\xxx),\quad \wgamma^+(t,\xxx):= \gamma_h^{i+1}(\xxx),
\end{split}
\end{equation}
and note the $\partial_t \gamma_{hk}(t,\xxx) = \dt \gamma_h^{i+1}(\xxx)$.

\begin{remark}\label{rem:en_decay}
The angle condition~\eqref{eq:assum1} is automatically fulfilled for tetrahedral meshes with dihedral angle smaller than $\pi/2$. It is needed to ensure the discrete energy decay
$
\int_\omega \big|\nabla\II_h\big(\frac{\mmm_h}{|\mmm_h|}\big)\big|^2 \le \int_\omega |\nabla \mmm_h|^2$, for the nodal interpoland $\II_h:C(\overline \Omega) \to \SS^1(\TT_h)$ and all $\mmm_h \in \SS^1(\TT_h)$ with $|\mmm_h(\zzz)| \ge 1$ for all $\zzz \in \NN_h$,
cf.~\cite{bartels}.
\end{remark}

The next statement is the main result of this work.

\begin{theorem}\label{thm:convergence}

\textbf{(a)} Suppose that there exists a constant $C_\pi > 0$ which only depends on $|\omega|$ such that the general energy contribution $\pi(\cdot)$ is uniformly bounded
\begin{align}\label{eq:assum2}
\norm{\pi(\nnn)}{\L^2(\omega)}^2 \le C_\pi, \quad \text{ for all } \nnn \in \L^2(\omega) \text{ with } \norm{\nnn}{\L^2(\omega)}^2 \le 1.
\end{align}
Moreover, for the initial data, we assume
\begin{align}\label{eq:assum4}
\mmm_h^0 \rightharpoonup \mmm^0 \quad \text{ weakly in } \H^1(\omega), \quad \text{ as well as } \quad
\HHH_h^0 \rightharpoonup \HHH^0 \text{ weakly in } \HHH(\emph{\curl}, \Omega).
\end{align} 
Then, we have strong subconvergence of $\mmm_{hk}^-$ towards some function $\mmm$ in $\L^2(\Omega_T)$.

\bigskip

\noindent \textbf{(b)} In addition to the above, we assume
\begin{align}\label{eq:assum3}
\pi(\mmm_{hk}^-) \rightharpoonup \pi(\mmm) \quad \text { weakly subconvergent in $\L^2(\omega_T)$}.
\end{align}
Then, the computed FE solutions $(\mmm_{hk}, \HHH_{hk})$ are weakly subconvergent in $\H^1(\omega_T) \times \big(H^1(\L^2(\Omega)) \cap L^2(\HHH(\emph{\curl}, \Omega))\big)$ towards a weak solution $(\mmm, \HHH)$ of ELLG. In particular, this yields existence of weak solutions and each accumulation point of $(\mmm_{hk}, \HHH_{hk})$ is a weak solution in the sense of Definition~\ref{def:weak_sol}.

\end{theorem}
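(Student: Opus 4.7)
The plan is to follow the standard Alouges-type convergence strategy, adapted to the decoupled ELLG setting by combining ideas from~\cite{alouges2008, alouges2011, lt, maxwell}. I will proceed in four steps: discrete energy estimates, extraction of weak subsequential limits, strong $\L^2$-subconvergence of $\mmm_{hk}^-$ via Aubin--Lions (this gives part~(a)), and finally passing to the limit in the discrete variational equations to identify $(\mmm, \HHH)$ as a weak solution of ELLG (part~(b)). For the a priori estimates, I first test~\eqref{eq:alg:1} with $\pphi_h = \vvv_h^i$: the cross-product term drops out by skew-symmetry, leaving
\begin{align*}
\alpha \norm{\vvv_h^i}{\L^2(\omega)}^2 + C_e \theta k \norm{\nabla\vvv_h^i}{\L^2(\omega)}^2 + C_e(\nabla\mmm_h^i, \nabla\vvv_h^i) = (\HHH_h^i + \pi(\mmm_h^i), \vvv_h^i).
\end{align*}
The angle condition~\eqref{eq:assum1} (cf.\ Remark~\ref{rem:en_decay}) absorbs the normalization step~(ii) and yields $\norm{\nabla\mmm_h^{i+1}}{\L^2(\omega)}^2 \le \norm{\nabla\mmm_h^i + k\nabla\vvv_h^i}{\L^2(\omega)}^2$, so multiplying the previous identity by $k$ and summing over $i$ makes the gradient cross-term telescope. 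Testing~\eqref{eq:alg:2} with $\zzeta_h = k\HHH_h^{i+1}$ and then $\zzeta_h = k\dt\HHH_h^{i+1}$ delivers the analogous bounds for $\HHH_h^n$ in $\HHH(\curl;\Omega)$ and for $\sum k \norm{\dt\HHH_h^{i+1}}{\L^2(\Omega)}^2$. Young's inequality, assumption~\eqref{eq:assum2}, and a discrete Gronwall argument then close the $h,k$-independent estimate.

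These bounds translate into uniform boundedness of $\mmm_{hk}, \mmm_{hk}^\pm$ in $L^\infty(\H^1(\omega))$ with $\partial_t \mmm_{hk} \in \L^2(\omega_T)$, and of $\HHH_{hk}, \HHH_{hk}^\pm$ in $L^\infty(\L^2(\Omega))\cap L^2(\HHH(\curl;\Omega))$ with $\partial_t\HHH_{hk} \in \L^2(\Omega_T)$; Banach--Alaoglu then extracts a common subsequence with weak(-$*$) limits $(\mmm, \HHH)$ and a weak $\L^2$-limit $\vvv$ of $\vvv_{hk}^-$. The standard estimate $\norm{\mmm_{hk} - \mmm_{hk}^\pm}{\L^2(\omega_T)} \lesssim k$ shows that all three time-interpolants share the same limit, and $\vvv = \mmm_t$ follows by identifying difference quotients. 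The compact embedding $\H^1(\omega) \hookrightarrow \L^2(\omega)$ combined with the Aubin--Lions lemma upgrades this to strong $\L^2(\omega_T)$-convergence of $\mmm_{hk}^\pm$; passing to a further subsequence with pointwise a.e.\ convergence and combining with $|\mmm_h^i(\zzz)| = 1$ (Lemma~\ref{lem:solve:alg}) together with the uniform bound $\norm{\mmm_{hk}^-}{\L^\infty(\omega)} \le 1$ gives $|\mmm|=1$ a.e., which completes part~(a).

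For part~(b), I use the nodewise tangential projection $\pphi_h^i := \II_h(\mmm_h^i \times (\vphi(t_i)\times \mmm_h^i)) \in \KK_{\mmm_h^i}$ of a smooth test function $\vphi \in C^\infty(\overline{\omega_T})$ in~\eqref{eq:alg:1}, multiply by $k$, and sum over $i$ to rewrite everything in terms of $\mmm_{hk}^-$ and $\vvv_{hk}^-$. The resulting terms pass to the limit using strong $\L^2$-plus-pointwise convergence of $\mmm_{hk}^-$ together with weak $\L^2$-convergence of $\vvv_{hk}^-$ for the cross-product and time-derivative contributions, the uniform $\L^\infty(\omega)$-bound from Lemma~\ref{lem:solve:alg}, assumption~\eqref{eq:assum3} for the $\pi$-term, and the $O(k^{1/2})$ smallness (in $\L^2(\omega_T)$) of the $\theta k \nabla \vvv_{hk}^-$ stabilization, which vanishes in the limit. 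This yields~\eqref{eq:weak_sol1}. For~\eqref{eq:alg:2}, testing with the N\'ed\'elec interpolant $\II_{\XX_h}\zzeta$ and applying~\eqref{eq:interp} for strong approximation in $\HHH(\curl;\Omega)$, together with the weak convergences of $\partial_t\HHH_{hk}$, $\nabla\times\HHH_{hk}$, and $\vvv_{hk}^-$ from Step~2, directly yields~\eqref{eq:weak_sol2}. The initial conditions are recovered from~\eqref{eq:assum4} and weak continuity in time, while~\eqref{eq:energy} follows by weak lower semicontinuity from the discrete estimate of Step~1.

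The technically most delicate point is passing to the limit in the gradient-nonlinearity coming from $C_e(\nabla\mmm_h^i, \nabla\pphi_h^i)$, which must deliver $-C_e\int_{\omega_T}(\nabla\mmm\times\mmm)\cdot\nabla\vphi$ as in~\eqref{eq:weak_sol1}. This relies on an $\II_h$-commutator estimate of the form
\begin{align*}
\norm{\II_h\big(\mmm_h \times (\vphi\times\mmm_h)\big) - \mmm_h \times (\vphi\times\mmm_h)}{\H^1(\omega)} \longrightarrow 0
\end{align*}
for the nodal interpolation of products of piecewise-affine and smooth functions, combined with the strong $\L^2$-convergence of $\mmm_{hk}^-$, the uniform $\L^\infty(\omega)$-bound from Lemma~\ref{lem:solve:alg}, and the weak $\L^2$-convergence of $\nabla\mmm_{hk}^-$. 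Such commutator estimates are by now standard in Alouges-type analyses, but constitute the main obstacle of the proof; every other ingredient is a clean assembly of well-known weak-convergence tools.
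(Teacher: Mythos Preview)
Your four-step strategy (discrete energy bounds, extraction of weak limits, strong $\L^2$-compactness for $\mmm_{hk}^-$, passage to the limit) matches the paper's structure (Lemmas~\ref{lem:stability}--\ref{lem:subsequences} followed by the proof of Theorem~\ref{thm:convergence}). Minor differences: the paper closes the energy estimate without Gronwall, by choosing the Young parameter so that the coupling defect $(\vvv_h^i,\HHH_h^i-\HHH_h^{i+1})$ is absorbed directly under the restriction $k<\alpha$; and for strong $\L^2$-convergence it simply invokes the compact embedding $\H^1(\omega_T)\hookrightarrow\L^2(\omega_T)$ rather than Aubin--Lions.

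The genuine gap is your choice of test function for the LLG limit. You take the tangential projection $\pphi_h^i=\II_h\big(\mmm_h^i\times(\vphi\times\mmm_h^i)\big)$, whereas the paper uses the \emph{single} cross product $\pphi_h=\II_h(\mmm_{hk}^-\times\vphi)$. This is not cosmetic. With the single cross product, the exchange contribution reads
\[
\big(\partial_j\mmm_{hk}^-,\,\partial_j(\mmm_{hk}^-\times\vphi)\big)
=\big(\partial_j\mmm_{hk}^-,\,\mmm_{hk}^-\times\partial_j\vphi\big),
\]
because $\partial_j\mmm_{hk}^-\cdot(\partial_j\mmm_{hk}^-\times\vphi)=0$ pointwise; the surviving term is a weak--strong pairing and passes to $-C_e\int_{\omega_T}(\nabla\mmm\times\mmm)\cdot\nabla\vphi$ directly. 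With your double cross product, expanding $\mmm_h\times(\vphi\times\mmm_h)=|\mmm_h|^2\vphi-(\mmm_h\cdot\vphi)\mmm_h$ and differentiating produces terms of the type $(\mmm_h\cdot\partial_j\mmm_h)(\partial_j\mmm_h\cdot\vphi)$ and $(\mmm_h\cdot\vphi)\,|\partial_j\mmm_h|^2$. These are products of two sequences that converge only weakly in $\L^2$ (respectively, are merely bounded in $\L^1$), and one cannot pass to the limit in them with the available compactness. The paper in fact flags exactly this obstruction in the remark following Lemma~\ref{lem:solve:alg}: attempting to recover a weak form of~\eqref{eq:llg:algo} from the scheme fails because ``an additional term occurs''. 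Hence your claim that the tangential-projection test function ``yields~\eqref{eq:weak_sol1}'' is not justified; replacing it by $\II_h(\mmm_{hk}^-\times\vphi)$ removes the difficulty and the rest of your outline goes through essentially as in the paper.
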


\begin{remark}
The conditions~\eqref{eq:assum2} and~\eqref{eq:assum3} are fulfilled for all field contributions mentioned in Section~\ref{sec:problem}. Moreover, those conditions are fulfilled by the operators arising from certain (nonlinear) multiscale problems, as well as their respective numerical discretizations, cf.~\cite{multiscale}.
\end{remark}

The proof of the main Theorem~\ref{thm:convergence} will roughly be done in three steps:
\begin{enumerate}
\item[(i)] Boundedness of the discrete quantities and energies.
\item[(ii)] Existence of weakly convergent subsequences.
\item[(iii)] Identification of the limits with a weak solution of ELLG.
\end{enumerate}

\begin{lemma}\label{lem:stability}
For all $k < \alpha$, the discrete quantities $(\mmm_h^j, \HHH_h^{j}) \in \MM_h \times \XX_h$ fulfill 
\begin{align}\label{eq:discrete_energy}\nonumber
\norm{\nabla \mmm_h^j}{\L^2(\omega)}^2 +& k\sum_{i=0}^{j-1}\norm{\vvv_h^i}{\L^2(\omega)}^2  + \big(\theta - 1/2\big)k^2 \sum_{i=0}^{j-1}\norm{\nabla \vvv_h^i}{\L^2(\omega)}^2 + \norm{\HHH_h^j}{\L^2(\Omega)}^2 + \norm{\nabla \times \HHH_h^j}{\L^2(\Omega)}^2 \\\nonumber
& + \sum_{i=0}^{j-1}\norm{\HHH_h^{i+1} - \HHH_h^i}{\L^2(\Omega)}^2 + k \sum_{i=0}^{j-1} \norm{\dt \HHH_h^{i+1}}{\L^2(\Omega)}^2 + k \sum_{i=0}^{j-1}\norm{\nabla \times \HHH_h^{i+1}}{\L^2(\Omega)}^2 \\
&+ \sum_{i=0}^{j-1}\norm{\nabla \times (\HHH_h^{i+1} - \HHH_h^i)}{\L^2(\Omega)}^2
 \le \c{en_dis}
\end{align}
for each $j = 0, \hdots, N$ and some constant $\setc{en_dis}>0$ that only depends on $|\Omega|$, on $|\omega|$, as well as on $C_\pi$.
\end{lemma}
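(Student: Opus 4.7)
The plan is to derive a discrete analog of the energy identity satisfied by weak solutions by suitably testing the two algorithm steps, and then closing the estimate via a discrete Gronwall argument. First, I would test the LLG step~\eqref{eq:alg:1} with $\pphi_h = \vvv_h^i \in \KK_{\mmm_h^i}$, which is admissible by construction. The cross-product term disappears by pointwise orthogonality, and multiplication by $k$ yields
\begin{align*}
\alpha k \norm{\vvv_h^i}{\L^2(\omega)}^2 + C_e \theta k^2 \norm{\nabla \vvv_h^i}{\L^2(\omega)}^2 = -C_e k (\nabla \mmm_h^i, \nabla \vvv_h^i) + k(\HHH_h^i, \vvv_h^i) + k(\pi(\mmm_h^i), \vvv_h^i).
\end{align*}
Step~(ii), the pointwise lower bound $|\mmm_h^i(\zzz) + k \vvv_h^i(\zzz)| \ge 1$ from Lemma~\ref{lem:solve:alg}, and the angle condition~\eqref{eq:assum1} together give (cf.\ Remark~\ref{rem:en_decay}) the nodal energy-decay bound
\begin{align*}
\norm{\nabla \mmm_h^{i+1}}{\L^2(\omega)}^2 \le \norm{\nabla \mmm_h^i}{\L^2(\omega)}^2 + 2k(\nabla \mmm_h^i, \nabla \vvv_h^i) + k^2 \norm{\nabla \vvv_h^i}{\L^2(\omega)}^2,
\end{align*}
which I would substitute into the mixed term $-C_e k (\nabla \mmm_h^i, \nabla \vvv_h^i)$ to obtain an identity~(A) in telescoping form, whose left-hand side is $\alpha k \|\vvv_h^i\|^2 + C_e(\theta - 1/2) k^2 \|\nabla \vvv_h^i\|^2 + \tfrac{C_e}{2}(\|\nabla \mmm_h^{i+1}\|^2 - \|\nabla \mmm_h^i\|^2)$ and whose right-hand side consists only of the two inner products $k(\HHH_h^i, \vvv_h^i) + k(\pi(\mmm_h^i), \vvv_h^i)$.

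Next, I would test the eddy-current step~\eqref{eq:alg:2} twice, once with $\zzeta_h = \HHH_h^{i+1} \in \XX_h$ and once with $\zzeta_h = \dt \HHH_h^{i+1} \in \XX_h$. Exploiting the binomial identity
\begin{align*}
(\dt \HHH_h^{i+1}, \HHH_h^{i+1}) = \tfrac{1}{2k}\big(\norm{\HHH_h^{i+1}}{\L^2(\Omega)}^2 - \norm{\HHH_h^i}{\L^2(\Omega)}^2\big) + \tfrac{1}{2k}\norm{\HHH_h^{i+1} - \HHH_h^i}{\L^2(\Omega)}^2
\end{align*}
and the commutation $\nabla \times (\dt \HHH_h^{i+1}) = \dt(\nabla \times \HHH_h^{i+1})$, these two tests produce identities (B) and (C) in telescoping form whose left-hand sides reproduce precisely the remaining $\HHH$-quantities in~\eqref{eq:discrete_energy}, and whose right-hand sides equal $-\mu_0 k(\vvv_h^i, \HHH_h^{i+1})$ and $-\mu_0 k(\vvv_h^i, \dt \HHH_h^{i+1})$, respectively. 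Summing (A), (B), and (C) from $i=0$ to $j-1$, the initial-data contributions are bounded uniformly via~\eqref{eq:assum4}, and every quantity on the left-hand side of~\eqref{eq:discrete_energy} appears with a non-negative prefactor (this uses the implicit assumption $\theta \ge 1/2$).

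Finally, I would estimate the four mixed right-hand side terms by Young's inequality. The $\norm{\pi(\mmm_h^i)}{\L^2(\omega)}^2$ contribution is uniformly bounded in $C_\pi$ and $|\omega|$ via~\eqref{eq:assum2} combined with $\norm{\mmm_h^i}{\L^\infty(\omega)} = 1$ (Lemma~\ref{lem:solve:alg}). The $k \sum \|\vvv_h^i\|^2$ and $k \sum \|\dt \HHH_h^{i+1}\|^2$ contributions produced by Young's are absorbed into the $\alpha k \sum \|\vvv_h^i\|^2$ and $\mu_0 k \sum \|\dt \HHH_h^{i+1}\|^2$ already available on the left; to absorb \emph{both} $\vvv$-contributions coming from (B) and (C) I would first multiply (A) by a constant depending on $\alpha$ and $\mu_0$ so that the $\vvv$-budget on the left is sufficient. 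What remains on the right is a term of the form $k \sum_{i=0}^{j-1} \|\HHH_h^{i+1}\|^2$ plus a data-dependent constant, and since the combined left-hand side controls $\|\HHH_h^j\|^2$ at every discrete time, a discrete Gronwall inequality closes the estimate; the smallness condition $k < \alpha$ is exactly what guarantees applicability of Gronwall with the constant produced by the Young weights. The main obstacle will be this bookkeeping of Young's parameters: one has to ensure simultaneously that (i) every term of~\eqref{eq:discrete_energy} survives with a strictly positive coefficient on the left, (ii) all four cross terms on the right can be absorbed, and (iii) the final Gronwall step requires no restriction stronger than $k < \alpha$.
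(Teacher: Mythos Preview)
Your outline is largely correct and would lead to a valid proof, but it diverges from the paper at the key step where the two equations are coupled, and this difference matters for the final condition on $k$.

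In the paper, after testing~\eqref{eq:alg:2} with $\zzeta_h=\HHH_h^{i+1}$ (your identity (B)), the right-hand side $-\mu_0 k(\vvv_h^i,\HHH_h^{i+1})$ is split as $-\mu_0 k(\vvv_h^i,\HHH_h^{i}) + \mu_0 k(\vvv_h^i,\HHH_h^{i}-\HHH_h^{i+1})$. The first piece then \emph{cancels exactly} against the term $\mu_0 k(\HHH_h^i,\vvv_h^i)$ coming from the LLG test (your identity (A), after rescaling by $\mu_0/C_e$). This cancellation is the structural point of the argument: it reflects that the continuous coupling is energy-neutral. What survives is only the difference term $\mu_0 k(\vvv_h^i,\HHH_h^{i}-\HHH_h^{i+1})$, which is absorbed via Young not into $k\sum\norm{\HHH_h^{i+1}}{\L^2}^2$ but into the increment sum $\sum\norm{\HHH_h^{i+1}-\HHH_h^i}{\L^2}^2$ already sitting on the left from Abel's summation. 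With this, no Gronwall step is needed at all; the competing Young parameters reduce to the two inequalities $\alpha-2\eps>0$ and $1-k/(2\eps)>0$, i.e.\ $k/2<\eps<\alpha/2$, which is precisely the stated hypothesis $k<\alpha$.

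Your Gronwall route should still close, but it treats the coupling as a generic lower-order perturbation rather than exploiting its sign structure. Two consequences: first, the constant you obtain will depend exponentially on $T$ rather than linearly; second, your assertion that ``$k<\alpha$ is exactly what guarantees applicability of Gronwall'' is not evident---the threshold you will actually get depends on the specific Young weights and the scaling factor you put on (A), and without the cancellation there is no reason it should come out as $k<\alpha$ on the nose. If you want to match the lemma as stated, you should incorporate the cancellation; otherwise your argument proves a slightly weaker (but still sufficient for the downstream analysis) version of the estimate.
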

\begin{proof}
For the eddy-current equation~\eqref{eq:alg:2} in step (iii) of Algorithm~\ref{alg}, we choose $\zzeta_h= \HHH_h^{i+1}$ as test function and multiply by $\frac{k}{\Ce}$ to get
\begin{align}\label{eq:tmp}
&\frac{\mu_0}{\Ce}(\HHH_h^{i+1} - \HHH_h^i, \HHH_h^{i+1}) + \frac{k }{\sigma\Ce} \norm{\nabla \times \HHH_h^{i+1}}{\L^2(\Omega)}^2 = -\frac{\mu_0 k}{\Ce}(\vvv_h^i, \HHH_h^{i}) + \frac{\mu_0k}{\Ce} (\vvv_h^i, \HHH_h^{i} - \HHH_h^{i+1}).
\end{align}
The LLG equation~\eqref{eq:alg:1} is tested with $\vphi_h = \vvv_h^i \in \KK_{\mmm_h^i}$. With $\big((\mmm_h^i \times \vvv_h^i), \vvv_h^i\big) = 0$, this yields after multiplication with $\frac{\mu_0k}{C_e} >0$
\begin{align*}
\frac{\mu_0\alpha k}{C_e} \norm{\vvv_h^i}{\L^2(\omega)}^2 + \mu_0\theta k^2 \norm{\nabla \vvv_h^i}{\L^2(\omega)}^2 = - \mu_0 k (\nabla \mmm_h^i, \nabla \vvv_h^i) + \frac{\mu_0 k}{C_e} (\HHH_h^i, \vvv_h^i) + \frac{\mu_0k}{C_e}\big(\pi(\mmm_h^i), \vvv_h^i\big).
\end{align*}
Next, we follow the lines of~\cite{alouges2008} and use the fact that $\norm{\nabla \mmm_h^{i+1}}{\L^2(\omega)}^2 \le \norm{\nabla(\mmm_h^i + k\vvv_h^i)}{\L^2(\omega)}^2$, cf.\ Remark~\ref{rem:en_decay}, to see
\begin{equation}\label{eq:nabla_m_bounded}
\begin{split}
\frac{\mu_0}{2}\norm{\nabla \mmm_h^{i+1}}{\L^2(\omega)}^2 
&\le \frac{\mu_0}{2} \norm{\nabla \mmm_h^i}{\L^2(\omega)}^2 + \mu_0k\,(\nabla \mmm_h^i, \nabla \vvv_h^i)+ \frac{\mu_0 k^2}{2}\norm{\nabla \vvv_h^i}{\L^2(\omega)}^2 \\
&\le \frac{\mu_0}{2} \norm{\nabla \mmm_h^i}{\L^2(\omega)}^2 - \mu_0\big(\theta - 1/2\big)k^2\norm{\nabla \vvv_h^i}{\L^2(\omega)}^2\\
&\quad - \frac{\alpha\mu_0 k}{C_e}\norm{\vvv_h^i}{\L^2(\omega)}^2 + \frac{\mu_0k}{C_e}(\HHH_h^i, \vvv_h^i) + \frac{\mu_0k}{C_e}\big(\pi(\mmm_h^i), \vvv_h^i\big).
\end{split}
\end{equation}
Combining~\eqref{eq:tmp}--\eqref{eq:nabla_m_bounded}, we obtain
\begin{align*}
\frac{\mu_0}{2}(\norm{\nabla \mmm_h^{i+1}}{\L^2(\omega)}^2 - \norm{\nabla \mmm_h^i}{\L^2(\omega)}^2) &+ \mu_0(\theta -1/2)k^2\norm{\nabla \vvv_h^i}{\L^2(\omega)}^2 + \frac{\alpha \mu_0 k}{\Ce}\norm{\vvv_h^i}{\L^2(\omega)}^2 \\
& \quad+ \frac{\mu_0}{\Ce}(\HHH_h^{i+1} - \HHH_h^i, \HHH_h^{i+1}) + \frac{k}{\sigma\Ce}\norm{\nabla \times \HHH_h^{i+1}}{\L^2(\Omega)}^2 \\
& \le \frac{\mu_0 k}{\Ce}(\vvv_h^i, \HHH_h^i - \HHH_h^{i+1}) + \frac{\mu_0 k}{\Ce}(\pi(\mmm_h^i), \vvv_h^i).
\end{align*}
Next, we recall Abel's summation by parts, i.e.\ for arbitrary $u_i \in \R$ and $j \ge 0$, there holds
\begin{align}\label{eq:abel_sum}
\sum_{i=1}^j(u_i - u_{i-1},u_i) = \frac12 |u_j|^2 - \frac12|u_0|^2 + \frac12\sum_{i=1}^j|u_i - u_{i-1}|^2.
\end{align}
Summing up over $i=0, \hdots, j-1$, and exploiting Abel's summation for the $\HHH_h^i$ scalar product as well as the inequalities of Young and H\"older, this yields for any $\eps > 0$
\begin{align*}
&\frac{\mu_0}{2} \norm{\nabla \mmm_h^j}{\L^2(\omega)}^2 +\big(\theta - 1/2\big)\mu_0k^2\sum_{i=0}^{j-1}\norm{\nabla \vvv_h^i}{\L^2(\omega)}^2+ \frac{\alpha k\mu_0}{C_e}\sum_{i=0}^{j-1}\norm{\vvv_h^i}{\L^2(\omega)}^2 + \frac{\mu_0}{2\Ce}\norm{\HHH_h^j}{\L^2(\Omega)}^2\\
&\quad + \frac{\mu_0}{2\Ce}\sum_{i=0}^{j-1}\norm{\HHH_h^{i+1} - \HHH_h^i}{\L^2(\Omega)}^2 + \frac{k}{\sigma\Ce}\sum_{i=0}^{j-1}\norm{\nabla \times \HHH_h^{i+1}}{\L^2(\Omega)}^2\\
&\le \frac{\mu_0 k}{4
\eps\Ce}\sum_{i=0}^{j-1}(\norm{\pi(\mmm_h^i)}{\L^2(\omega)}^2
+ \norm{\HHH_h^{i+1} - \HHH_h^i}{\L^2(\Omega)}^2) +
\frac{\eps\mu_0
k}{\Ce}\sum_{i=0}^{j-1}\norm{\vvv_h^i}{\L^2(\omega)}^2
\\
& \quad + \frac{\mu_0}{2}(\norm{\nabla \mmm_h^0}{\L^2(\omega)} + \frac{1}{\Ce}\norm{\HHH_h^0}{\L^2(\Omega)}^2).
\end{align*}
With the notation
$
C_\vvv^k := \frac{2\mu_0 k}{C_e}(\alpha - \eps), \text{ and } C_\HHH^k := \frac{\mu_0}{C_e}\big(1 - \frac{k}{2\eps}\big),
$
this yields
\begin{equation}\label{proof:tmp2} 
\begin{split}
&\mu_0 \norm{\nabla \mmm_h^j}{\L^2(\omega)}^2 +2\big(\theta - 1/2\big)\mu_0k^2\sum_{i=0}^{j-1}\norm{\nabla \vvv_h^i}{\L^2(\omega)}^2 + C_\vvv^k\sum_{i=0}^{j-1}\norm{\vvv_h^i}{\L^2(\omega)}\\
&\quad + \frac{\mu_0}{C_e}\norm{\HHH_h^{j}}{\L^2(\Omega)}^2 + C_\HHH^k\sum_{i=0}^{j-1}\norm{\HHH_h^{i+1} - \HHH_h^i}{\L^2(\Omega)}^2+ \frac{2k}{\sigma\Ce}\sum_{i=0}^{j-1}\norm{\nabla \times \HHH_h^{i+1}}{\L^2(\Omega)}^2\\
&\le \frac{\mu_0 k}{2 \eps\Ce}\sum_{i=0}^{j-1}\norm{\pi(\mmm_h^i)}{\L^2(\omega)}^2 + \mu_0\norm{\nabla \mmm_h^0}{\L^2(\omega)} + \frac{\mu_0}{\Ce}\norm{\HHH_h^0}{\L^2(\Omega)}^2.
\end{split}
\end{equation}
Next, we test with $\zzeta_h = \dt \HHH_h^{i+1}$ in~\eqref{eq:alg:2} to obtain after multiplication by $2k$
\begin{align*}
2\mu_0 k \norm{\dt \HHH_h^{i+1}}{\L^2(\Omega)}^2 + 2 \sigma^{-1} (\nabla \times \HHH_h^{i+1}, \nabla \times (\HHH_h^{i+1} - \HHH_h^i)) = -2\mu_0 k (\vvv_h^i, \dt \HHH_h^{i+1}).
\end{align*}
The right-hand side can further be estimated by
\begin{align*}
-2\mu_0 k(\vvv_h^i, \dt \HHH_h^{i+1}) \le \mu_0 k \norm{\vvv_h^i}{\L^2(\omega)}^2 + \mu_0 k \norm{\dt \HHH_h^{i+1}}{\L^2(\Omega)}^2.
\end{align*}
Abel's summation by parts~\eqref{eq:abel_sum} thus yields
\begin{equation}\label{proof:tmp3}
\begin{split}
\mu_0 k \sum_{i=0}^{j-1}\norm{\dt \HHH_h^{i+1}}{\L^2(\Omega)}^2 + \sigma^{-1} \norm{\nabla \times \HHH_h^j}{\L^2(\Omega)}^2 &+ \sigma^{-1} \sum_{i=0}^{j-1}\norm{\nabla \times (\HHH_h^{i+1} - \HHH_h^i)}{\L^2(\Omega)}^2\\
& \le \sigma^{-1}\norm{\nabla \times \HHH_h^0}{\L^2(\Omega)}^2 + \mu_0 k \sum_{i=0}^{j-1}\norm{\vvv_h^i}{\L^2(\omega)}^2.
\end{split}
\end{equation}
Finally, we weight~\eqref{proof:tmp3} by $\alpha/C_e$ and add~\eqref{proof:tmp2}. The last term on the right-hand side of~\eqref{proof:tmp3} can be absorbed by the corresponding term on the left-hand side of~\eqref{proof:tmp2}. For the desired result, we have to ensure that there is a choices of $\eps$ such that the $C_\vvv^k - \mu_0 k \alpha/C_e$, and $C_\HHH^k$ are positive, i.e.\ $
(\alpha - 2\eps)> 0$ and $\big(1 - \frac{k}{2\eps}\big) > 0$. This is, however,
equivalent to $k/2 < \eps < \alpha/2$.
From the assumed convergence of the initial data~\eqref{eq:assum4} as well as~\eqref{eq:assum2}, we know that the right-hand side is uniformly bounded, which concludes the proof.
\end{proof}

We can now conclude the existence of weakly convergent subsequences.

\begin{lemma}\label{lem:subsequences}
There exist functions $(\mmm, \HHH) \in \H^1(\omega_T)\times \big(H^1(\L^2) \cap L^2(\HHH(\emph{\curl}))\big)$, with $|\mmm| = 1$ almost everywhere in $\omega$ such that up to extraction of a subsequence, there holds
\begin{subequations}\label{eq:subsequences}
\begin{align}
&\mmm_{hk} \rightharpoonup \mmm \text{ in } \H^1(\omega_T),\label{eq:subsequences0}\\
&\mmm_{hk}, \mmm_{hk}^\pm \rightharpoonup \mmm \text{ in } L^2(\H^1(\omega)),  \label{eq:subsequences1}\\
&\mmm_{hk}, \mmm_{hk}^\pm \rightarrow \mmm \text{ in } \L^2(\omega_T),\label{eq:subsequences2}\\
&\HHH_{hk} \rightharpoonup \HHH \text{ in } H^1(\L^2(\Omega)) \cap L^2(\HHH(\emph{\curl}, \Omega)),\\
&\HHH_{hk}^\pm \rightharpoonup \HHH \text{ in } L^2(\HHH(\emph{\curl}, \Omega)),\label{eq:subsequences3}\\
&\vvv_{hk}^- \rightharpoonup \mmm_t \text{ in } \L^2(\omega_T). \label{eq:subsequences4}
\end{align}
\end{subequations}
Here, the subsequences are constructed successively, i.e.\
for arbitrary mesh-sizes $h \rightarrow 0$, and time-step sizes $k \rightarrow 0$ there exist subindices $h_\ell, k_\ell$ for which the above convergence properties~\eqref{eq:subsequences} are satisfied simultaneously.
\end{lemma}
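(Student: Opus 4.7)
The plan is to translate the a priori bounds of Lemma~\ref{lem:stability} into uniform bounds on the time interpolants defined in~\eqref{eq:discrete_functions}, extract weakly convergent subsequences by Banach--Alaoglu, upgrade to strong $\L^2(\omega_T)$-convergence for $\mmm_{hk}$ by Rellich compactness, and then identify the limits across the three families $\gamma_{hk},\gamma_{hk}^\pm$ and verify the pointwise constraint $|\mmm|=1$.

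First, I would translate the discrete estimate~\eqref{eq:discrete_energy} into uniform bounds on the continuous-in-time interpolants: $\mmm_{hk},\mmm_{hk}^\pm$ are bounded in $L^\infty(0,T;\H^1(\omega))$, the fields $\HHH_{hk},\HHH_{hk}^\pm$ in $L^\infty(0,T;\HHH(\curl,\Omega))$, the derivative $\partial_t \HHH_{hk}$ in $\L^2(\Omega_T)$, and $\vvv_{hk}^-$ in $\L^2(\omega_T)$. To obtain the analogous bound on $\partial_t \mmm_{hk}$, I would use the nodal identity
\begin{align*}
\mmm_h^{i+1}(\zzz)-\mmm_h^i(\zzz)=k\vvv_h^i(\zzz)+(1-L_\zzz^i)\mmm_h^{i+1}(\zzz),\qquad L_\zzz^i:=\sqrt{1+k^2|\vvv_h^i(\zzz)|^2},
\end{align*}
which gives $|1-L_\zzz^i|\le k^2|\vvv_h^i(\zzz)|^2/2$; a standard nodal/mass-lumping argument together with the summed bound on $\norm{\vvv_h^i}{\L^2(\omega)}^2$ from Lemma~\ref{lem:stability} yields $\norm{\partial_t \mmm_{hk}}{\L^2(\omega_T)}\lesssim 1$, so $\mmm_{hk}$ is uniformly bounded in $\H^1(\omega_T)$.

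Since all target spaces in~\eqref{eq:subsequences} are reflexive, Banach--Alaoglu extracts weakly convergent subsequences of each family, and a diagonal procedure along $h_\ell,k_\ell\to 0$ furnishes a single subsequence on which~\eqref{eq:subsequences0}, the weak parts of~\eqref{eq:subsequences1}, the $\HHH_{hk}$-line, and~\eqref{eq:subsequences4} hold simultaneously. The strong convergence $\mmm_{hk}\to\mmm$ in $\L^2(\omega_T)$ asserted in~\eqref{eq:subsequences2} then follows from the compact embedding $\H^1(\omega_T)\subset\subset\L^2(\omega_T)$ (Rellich--Kondrachov). To transfer both the weak $L^2(\H^1)$ and strong $\L^2$ limits from $\mmm_{hk}$ to $\mmm_{hk}^\pm$, I would invoke the telescoping inequality
\begin{align*}
\norm{\mmm_{hk}^\pm-\mmm_{hk}}{\L^2(\omega_T)}^2\le k^2\norm{\partial_t \mmm_{hk}}{\L^2(\omega_T)}^2\lesssim k^2,
\end{align*}
together with the uniform $L^2(\H^1)$ bound from Step 1; the analogous estimate with $\dt\HHH_h^{i+1}$ in place of $\dt\mmm_h^{i+1}$ and the summed bound $k\sum\norm{\dt\HHH_h^{i+1}}{\L^2(\Omega)}^2\lesssim 1$ gives~\eqref{eq:subsequences3}.

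For~\eqref{eq:subsequences4}, the identification $\mmm_t=\lim\vvv_{hk}^-$ reduces to showing $\partial_t\mmm_{hk}-\vvv_{hk}^-\to 0$ strongly in $\L^2(\omega_T)$, which follows from the quadratic smallness $|1-L_\zzz^i|\lesssim k^2|\vvv_h^i(\zzz)|^2$ combined with the summed bound on $\norm{\vvv_h^i}{\L^2(\omega)}^2$. For the unit-length constraint $|\mmm|=1$, I would exploit that $|\mmm_h^i(\zzz)|=1$ at every $\zzz\in\NN_h$ and $\mmm_h^i\in\SS^1(\TT_h)$ is piecewise affine; a standard element-wise estimate (cf.~\cite{bartels}) then gives
\begin{align*}
\norm{\,|\mmm_{hk}^-|^2-1\,}{L^1(\omega_T)}\lesssim h^2\norm{\nabla\mmm_{hk}^-}{\L^2(\omega_T)}^2\to 0,
\end{align*}
and combining this with the strong $\L^2(\omega_T)$-convergence $\mmm_{hk}^-\to\mmm$ forces $|\mmm|=1$ a.e.\ in $\omega_T$. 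The main obstacle is the bookkeeping for $\partial_t\mmm_{hk}$ versus $\vvv_{hk}^-$: the nonlinear normalization in step~(ii) of Algorithm~\ref{alg} prevents these two quantities from coinciding, and one must exploit the \emph{quadratic} smallness of the defect $(1-L_\zzz^i)$ in combination with the global energy bound on $k\sum\norm{\vvv_h^i}{\L^2(\omega)}^2$ to obtain strong---rather than only weak---convergence of the difference; only then does the weak limit of $\vvv_{hk}^-$ get correctly identified as $\mmm_t$.
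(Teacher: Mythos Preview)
Your proposal follows essentially the same route as the paper's proof: translate Lemma~\ref{lem:stability} into uniform bounds on the interpolants, extract weak subsequences, upgrade to strong $\L^2(\omega_T)$ convergence of $\mmm_{hk}$ via Rellich, transfer limits to $\mmm_{hk}^\pm$ and $\HHH_{hk}^\pm$ by the $O(k)$ telescoping estimates, and identify the weak limit of $\vvv_{hk}^-$ with $\mmm_t$ through the defect produced by the normalization. The paper is terser---it cites~\cite{alouges2008} for the inequalities you derive explicitly via the nodal identity $\mmm_h^{i+1}(\zzz)-\mmm_h^i(\zzz)=k\vvv_h^i(\zzz)+(1-L_\zzz^i)\mmm_h^{i+1}(\zzz)$---but the substance is identical.

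One correction is warranted. The quadratic smallness $|1-L_\zzz^i|\le \tfrac{k^2}{2}|\vvv_h^i(\zzz)|^2$ combined with the summed bound $k\sum_i\norm{\vvv_h^i}{\L^2(\omega)}^2\lesssim 1$ yields, after a mass-lumping argument,
\[
\norm{\partial_t\mmm_{hk}-\vvv_{hk}^-}{\L^1(\omega_T)}\;\lesssim\; k\sum_i k\norm{\vvv_h^i}{\L^2(\omega)}^2\;\lesssim\; k,
\]
i.e.\ strong convergence in $\L^1(\omega_T)$, not in $\L^2(\omega_T)$ as you state; the $\L^2$ claim would require control of $\norm{\vvv_h^i}{\L^4(\omega)}$, which~\eqref{eq:discrete_energy} does not provide without a mesh condition. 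This does not damage the conclusion: since $\partial_t\mmm_{hk}$ and $\vvv_{hk}^-$ are each bounded in $\L^2(\omega_T)$, strong $\L^1$ convergence of their difference to zero already forces their weak $\L^2$ limits to coincide, which is all that~\eqref{eq:subsequences4} requires.
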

\begin{proof}
Analogously to~\cite[Lemma 9]{maxwell} and~\cite[Lemma 4.4]{lt}, the proof of~\eqref{eq:subsequences0}--\eqref{eq:subsequences3} directly follows from the boundedness of the discrete quantities from Lemma~\ref{lem:stability} in combination with the continuous inclusions $\H^1(\omega_T) \subseteq L^2(\H^1(\omega)) \subseteq \L^2(\omega_T)$ and $H^1(\L^2(\Omega)) \cap L^2(\HHH(\curl, \Omega)) \subseteq \L^2(\Omega_T)$. For~\eqref{eq:subsequences0}, we additionally exploited the inequality $\norm{\mmm_h^{i+1} - \mmm_h^i}{\L^2(\Omega)}^2 \le k^2\norm{\vvv_h^i}{\L^2(\Omega)}^2$, cf.~\cite{alouges2008}. From $\norm{\partial_t \mmm_{hk}(t) - \vvv_{hk}^-(t)}{\L^2(\Omega)} \lesssim k \norm{\vvv_{hk}^-(t)}{\L^2(\Omega)}^2$ (see~\cite{alouges2008}) and lower semi-continuity, we deduce~\eqref{eq:subsequences4}. The normalization of the limiting function $\mmm$ finally follows by direct calculation, i.e.\
\begin{align*}
\norm{|\mmm| - 1}{\L^2(\omega_T)} \le \norm{|\mmm| - |\mmm_{hk}^-|}{\L^2(\omega_T)} + \norm{|\mmm_{hk}^-| - 1}{\L^2(\omega_T)}
\end{align*}
and 
\begin{align*}
\norm{|\mmm_{hk}^-(t,\cdot)| - 1}{\L^2(\omega)} \le h \max_{t_j} \norm{\nabla \mmm_h^j}{\L^2(\omega)}.
\end{align*}
This concludes the proof.
\end{proof}

Now, we have collected all ingredients for the proof of our main theorem.

\begin{proof}[Proof of Theorem~\ref{thm:convergence}]
Let $\vphi \in C^\infty(\omega_T)$ and $\zzeta \in C^\infty(\Omega_T)$ be arbitrary. We now define test functions by $(\pphi_h, \zzeta_h)(t, \cdot) := \big(\II_h(\mmm_{hk}^-\times \vphi), \II_{\XX_h}\zzeta\big)(t,\cdot)$. Obviously, for any $t \in [t_j, t_{j+1})$, we have $(\pphi_h, \zzeta_h) \in (\KK_{\mmm_h^j}, \XX_h)$.
With the notation~\eqref{eq:discrete_functions}, Equation~\eqref{eq:alg:1} of Algorithm~\ref{alg} implies
\begin{align*}
\alpha \int_0^T (\vvv_{hk}^-, \pphi_h) + \int_0^T\big((\mmm_{hk}^- \times \vvv_{hk}^-), \pphi_h\big) &= -C_e\int_0^T\big(\nabla(\mmm_{hk}^- + \theta k \vvv_{hk}^-), \nabla \pphi_h)\big)\\
&\qquad \qquad  + \int_0^T (\HHH_{hk}^-, \pphi_h) + \int_0^T\big(\pi(\mmm_{hk}^-), \pphi_h\big)
\end{align*}
The approximation properties of the nodal interpolation operator $\II_h$, show

\begin{align*}
\int_0^T &\big((\alpha \vvv_{hk}^- + \mmm_{hk}^- \times\vvv_{hk}^-),(\mmm_{hk}^- \times \vphi)\big) + k\theta\int_0^T \big(\nabla \vvv_{hk}^-, \nabla(\mmm_{hk}^- \times \vphi)\big)\\
&\quad+ C_e\int_0^T\big(\nabla \mmm_{hk}^-, \nabla(\mmm_{hk}^- \times \vphi)\big) -\int_0^T \big(\HHH_{hk}^-, (\mmm_{hk}^-\times \vphi)\big)- \int_0^T \big(\pi(\mmm_{hk}^-), (\mmm_{hk}^-\times \vphi)\big)\\
&=\mathcal{O}(h)
\end{align*}
Passing to the limit and using the strong $\L^2(\omega_T)$-convergence of $(\mmm_{hk}^-\times \vphi)$ towards $(\mmm \times \vphi)$, in combination with Lemma~\ref{lem:subsequences} and the weak convergence property~\eqref{eq:assum3} of $\pi(\mmm_{hk}^-)$, this yields
\begin{align*}
 \int_0^T \big((\alpha \mmm_t + \mmm \times\mmm_t),(\mmm \times \vphi)\big) & = -C_e\int_0^T\big(\nabla \mmm, \nabla(\mmm \times \vphi)\big)\\
 &+ \int_0^T \big(\HHH, (\mmm\times \vphi)\big) + \int_0^T \big(\pi(\mmm), (\mmm\times \vphi)\big)
\end{align*}

Exploiting basic properties of the cross product, we conclude~\eqref{eq:weak_sol1}. The equality $\mmm(0, \cdot) = \mmm^0$ in the trace sense follows from the weak convergence $\mmm_{hk} \rightharpoonup \mmm$ in $\H^1(\omega_T)$ 
Analogously, we get $\HHH(0,\cdot) = \HHH^0$ in the trace sense. 
For the Eddy-current part,~\eqref{eq:alg:2} implies
\begin{align*}
& \mu_0 \int_0^T \big((\HHH_{hk})_t, \zzeta_h\big) + \sigma^{-1} \int_0^T(\nabla \times \HHH_{hk}^+, \nabla \times \zzeta_h) = -\mu_0 \int_0^T (\vvv_{hk}^-, \zzeta_h).
\end{align*}
The convergence properties from Lemma~\ref{lem:subsequences} in combination with the properties of the interpolation operator $\II_{\XX_h}$ from~\eqref{eq:interp} now reveal
\begin{align*}
\int_0^T \big((\HHH_{hk})_t, \zzeta_h\big) & \longrightarrow \int_0^T (\HHH_t, \zzeta),\\
\int_0^T (\nabla \times \HHH_{hk}^+, \nabla \times \zzeta_h) &\longrightarrow \int_0^T (\nabla \times \HHH, \nabla \times \zeta), \quad \text{ and }\\
\int_0^T (\vvv_{hk}^-, \zzeta_h) &\longrightarrow (\mmm_t, \zzeta),
\end{align*}
whence~\eqref{eq:weak_sol2}.

It remains to show the energy estimate~\eqref{eq:energy} which follows from the discrete energy estimate~\eqref{eq:discrete_energy} together with weak lower semi-continuity, cf.\ e.g.~\cite[Proof of Thm.\ 6]{maxwell} for details. This yields the desired result.
\end{proof}

\begin{remark}
Finally, we would like to comment on the choice of $\theta$.
\begin{enumerate}
\item For $0 \le \theta < 1/2$ one has to bound the negative term $(\theta - \frac12)k^2 \sum_{i=0}^{j-1} \norm{\nabla \vvv_h^i}{\L^2(\Omega)}^2$ on the left-hand side of~\eqref{eq:discrete_energy} in Lemma~\ref{lem:stability} in order to prove boundedness of the discrete quantities. This can be achieved by using an inverse estimate $\norm{\nabla \vvv_h^i}{\L^2(\Omega)}^2 \lesssim \frac{1}{h^2} \norm{\vvv_h^i}{\L^2(\Omega)}^2$. The upper bound can then be absorbed into the term $k \sum_{i = 0}^{j-1}\norm{\vvv_h^i}{\L^2(\Omega)}^2$ which yields convergence , cf.~\cite[Proof of Thm.\ 4.5]{lt} provided $k/h^2 \to 0$.
\item For the limiting case $\theta = \frac12$, Lemma~\ref{lem:stability} provides no boundedness of $\sqrt{k}\norm{\nabla \vvv_{hk}^-}{\L^2(\omega_T)}$. Therefore, the convergence 
\begin{align*}
k \theta \int_0^T \big(\nabla \vvv_{hk}^-, \nabla(\mmm_{hk}^- \times \vphi)\big) \to 0
\end{align*}
cannot be guaranteed. As suggested in~\cite{alouges2008}, this can be circumvented by an inverse estimate provided the fraction $\frac{k}{h}$ tends to zero.
\end{enumerate}
\end{remark}

\section{Numerical examples}\label{sec:numerics}
In order to carry out physically relevant experiments,
we choose $\mmm_0$ and $\HHH_0$
satisfying~\eqref{eq:consistency}. 
This can be achieved by taking
\begin{equation*}
\HHH_0 =\HHH_0^* -  \chi_{\omega}\mmm_0,
\end{equation*}
where $\diver \HHH_0^* = 0$ in $\Omega$. 
In our experiment, for simplicity, we choose
$\HHH_0^*$ to be a constant. 
We solve the standard problem $\#1$ proposed by the 
Micromagnetic Modeling Activity Group at the National Institute 
of Standards and Technology~\cite{mumag}. In this model, 
the initial conditions $\mmm_0$ and $\HHH_0$, and
the effective field $\HHH_{\text{eff}}$ are given as
\[
\mmm_0 = (1,0,0)\ \text{in}\ \omega, \quad
\HHH_0^* = (0,0,0)\ \text{in}\ \Omega,
\]
and
\[
\HHH_{\text{eff}}=\frac{2A}{\mu_0^{*}M_s^2}\Delta\mmm
+\HHH
+C_a\langle\mmm,\ppp\rangle\ppp
+\HHH_{ext}
\quad\text{with}\quad
\ppp = (1,0,0).
\]
The parameters for this problem are given below:
\begin{gather*}
\alpha=0.5,\quad
\sigma = 1,\quad
\mu_0 = 1.25667\times 10^{-6},\\
C_e = 5\times 10^2,\quad
A= 1.3\times 10^{-11},\quad
M_s = 8\times 10^{5}.
\end{gather*}

The domains $\omega$ and $\Omega$ are chosen 
(in $\mu m$) to be
\[
\omega = (0,2)\times (0,1)\times(0,0.02)
\]
and
\[
\Omega
=
(-0.2,2.2)\times(-0.2,1.2)\times(-0.04,0.06).
\]

The domain $\omega$ is uniformly partitioned into cubes of 
dimensions $0.1\times0.1\times0.02$, where each cube 
consists of six tetrahedra. We generate a nonuniform mesh for 
the magnetic domain~$\Omega$ in such a way that it is 
identical to the mesh for $\omega$ in the region $\omega$, 
and the mesh-size gradually increases away from
$\omega$.

For time discretization, we perform a uniform partition
of $[0,1]$ with timestep $k=0.01$. In each integration
step of Algorithm~\ref{alg}, we solved two linear
systems, one of size $2V\times2V$ where $V=462$ is the number
of vertices in the domain $\omega$, and another of size
$E\times E$ where $E=3991$ is the number edges in the
domain $\Omega$; see Figure~\ref{fig:mesh}.

Figure~\ref{fig:energy} depicts the evolution of the
exchange energy $\norm{\nabla\mmm_{h,k}(t)}{\omega}$, 
magnetic field energy
$\norm{\HHH_{h,k}(t)}{\Omega}$, and total energy
$\norm{\nabla\mmm_{h,k}(t)}{\omega}
+ \norm{\HHH_{h,k}(t)}{\Omega}
+ \norm{\nabla\times\HHH_{h,k}(t)}{\Omega}$.
The latter figure supports our theoretical result that
these energies are bounded.
\begin{figure}[p]
\centering
\includegraphics[width=4.5cm,height=4.5cm]{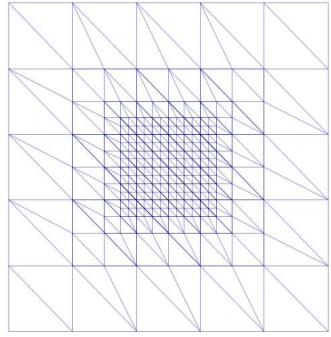}
\caption{Mesh for the domain $\Omega$ at
$z=0$.}\label{fig:mesh}
\end{figure}
\begin{figure}
\centering
\includegraphics[width=13cm,height=15cm]{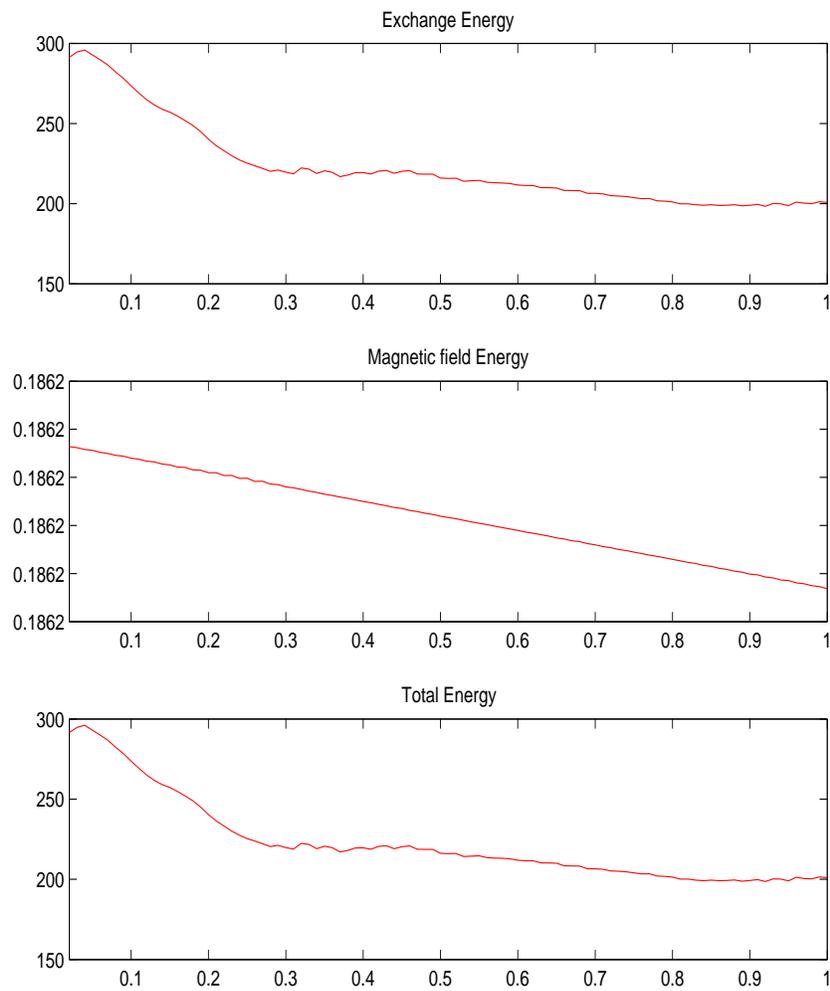}
\caption{Evolution of exchange, magnetic field, and 
total energies}\label{fig:energy}
\end{figure}

\noindent \textbf{Acknowledgements.}
The authors acknowledge financial support through the
Austrian projects WWTF MA09-029, FWF P21732, and
the Australian ARC project DP120101886.

\def\new#1{#1}

\newcommand{\bibentry}[2][!]{\ifthenelse{\equal{#1}{!}}{\bibitem{#2}}{\bibitem[#1]{#2}}}


\end{document}